\newtheorem{theorem}{Theorem}
\newtheorem{lemma}[theorem]{Lemma}
\theoremstyle{remark}
\newtheorem*{remark}{Remark}
\newcommand*{\pc}[2][]{#1{p}_\mathrm{c}^{#2}}
\newcommand*{\future}[1][0]{(#1,\infty)}
\newcommand*{\prob}[1]{\mathbb{P}(#1)}
\newcommand*{\probz}[1]{\mathbb{P}_{\mathbb{Z}}(#1)}
\newcommand*{\probb}[1]{\mathbb{P}\bigl(#1\bigr)}
\newcommand*{\probbz}[1]{\mathbb{P}_{\mathbb{Z}}\bigl(#1\bigr)}
\newcommand*{\onab}[3][1]{\{#3\}_{[x_{#1},x_{#2}]}}
\newcommand*{\onabz}[3][1]{\{#3\}_{[#1,#2]}}
\newcommand*{\onfuture}[2][0]{\{#2\}_{\future[#1]}}
\newcommand{\from}{\mathbin{\leftarrow}}
\renewcommand{\to}{\mathbin{\rightarrow}}
\newcommand{\triple}{\to\!\bullet\!\from\bullet}
\newcommand{\nfrom}{\mathbin{\centernot\leftarrow}}
\newcommand*{\go}[1][]{\vec{\bullet}_{#1}}
\newcommand*{\come}[1][]{\reflectbox{\ensuremath{\vec{\bullet}}}_{#1}}
\newcommand*{\stay}[1][]{\dot{\bullet}_{#1}}
\newcommand{\ie}{i.e.\ }
\newcommand{\eg}{e.g.\ }
\newcommand*{\bbr}[1]{\Bigl(#1\Bigr)}
\newcommand*{\lem}[1]{Lemma~\ref{#1}}
\newcommand*{\thm}[1]{Theorem~\ref{#1}}
\DeclareMathOperator{\rev}{rev}
\title{The ballistic annihilation threshold is positive}
\author{John Haslegrave\thanks{Mathematics Institute, University of Warwick, Coventry, UK}}
\begin{document}
\maketitle

\begin{abstract}
In the ballistic annihilation process, particles on the real line have
independent speeds symmetrically distributed in $\{-1,0,+1\}$ and
are annihilated by collisions. 
It is widely believed that there is a phase transition at $p=\pc{}=0.25$
between regimes where every particle is eventually annihilated and 
where some particles survive forever, where $p$ is the proportion
of stationary particles. It is easy to see that some 
particles survive if $p>0.5$, and rigorous proofs giving
better upper bounds on $\pc{}$ have recently appeared.
However, no nontrivial lower bound on $\pc{}$
was previously known. We prove that $\pc{}\geq 0.21699$,
and give a comparable bound for a discretised version.

\noindent\textbf{Keywords:} ballistic annihilation; phase transition; interacting particle system.

\noindent\textbf{AMS MSC 2010:} 60K35.
\end{abstract}

\section{Introduction}
In the ballistic annihilation process, particles start as a Poisson point process on the real line. 
Each particle independently is either stationary (with probability $p$) or moving at unit 
speed left or right (each with probability $\frac{1-p}2$). When two particles collide, they annihilate each other.

This process, in a variety of forms, was studied by physicists in the 1980s and 1990s. Early work (\eg \cite{EF85,KS88}) 
considered the case where only two speeds are permitted; subsequently continuous speed distributions 
and more general discrete distributions were studied (\cite{BRL93,KRL95,DRFP,Red97}), with the latter showing more 
interesting behaviour. In the canonical three-speed case described above,
Krapivsky, Redner and Leyvraz \cite{KRL95} postulated the existence of a critical probability $\pc{}$, such that for $p<\pc{}$ 
every particle is eventually annihilated, whereas for $p>\pc{}$ some particles survive forever.
Based on a heuristic derived from considering the rate at which different types of collisions might be expected to occur,
they conjectured that $\pc{}=0.25$, and this conjecture is strongly supported by exact computations of Droz, Rey, Frachebourg
and Piasecki \cite{DRFP} resolving related differential equations. However, these results
are not entirely rigorous, and do not provide any intuitive understanding of the process.

Ballistic annihilation has more recently received significant attention from mathematicians, stemming from the popularisation
by Kleber and Wilson \cite{KW14} of a related ``bullet problem'', in which particles with independent uniformly-distributed random speeds 
leave the origin, and are annihilated by collisions. The distribution of the number of surviving bullets
from a given finite number of shots was rigorously established by Broutin and Marckert \cite{BM17},
using a surprisingly intricate argument. When infinitely many bullets are fired, it is conjectured that
there is some critical speed $s_{\mathrm{c}}>0$ such that the first bullet survives with positive probability if it has
a higher speed, but is almost surely annihilated if it has a lower speed. Dygert, Kinzel, Zhu, Junge, Raymond and Slivken \cite{DKZJRS}
solved the corresponding problem for speeds chosen uniformly from a given finite set, and their results also give upper
bounds for the ballistic annihilation problem.

A discretised version of ballistic annihilation was introduced recently by Burdinski, Gupta, and Junge \cite{BGJ18}. In this variation,
instead of random starting positions we start with one particle at every integer point. The principle difference this makes to the
process is that triple collisions occur with positive probability; in a triple collision, all three particles are annihilated.

An ergodicity argument implies that (at any value of $p$) each moving particle is eventually annihilated,
and so the basic question of ballistic annihilation is whether stationary particles can survive forever.
It is easy to see that for sufficiently large values of $p$ almost surely infinitely many stationary particles 
will survive. For example, if a stationary particle is to be annihilated, there must be an interval
containing that particle which contains at least as many moving particles as stationary particles at time $0$,
but for $p>0.5$ there is a positive probability that no such interval exists. 

We shall consider the function $\theta(p)$, being the probability that a given stationary particle, 
without loss of generality positioned at $0$, survives forever; we write $\psi(p)$ for the corresponding 
probability in the discretised model. Since it is not obvious that $\theta(p)$ is increasing, we cannot 
say that there is necessarily a single critical probability. Thus we write
\begin{gather*}\pc-:=\inf\{p\in [0,1]:\theta(p)>0\}\,,\\
\pc+:=\sup\{p\in [0,1]:\theta(p)=0\}\,;\end{gather*} 
clearly these both exist and $0\leq\pc-\leq\pc+\leq 0.5$. 
We write $\pc[\hat]-$ and $\pc[\hat]+$ for the corresponding values for the discretised process.

Improved upper bounds on $\pc+$ have recently been obtained independently by Dygert et al.\ \cite{DKZJRS}
(who prove $\pc+\leq0.3313$) and by Sidoravicius and Tournier \cite{ST17} (who prove $\pc+\leq 1/3$ and sketch details which improve their bound to
$\pc+\leq 0.32803$). Burdinski, Gupta, and Junge \cite{BGJ18} show that $\pc[\hat]+\leq0.287$. They argue that the heuristic rationale
for $\pc{}=0.25$, adapted to include the possibility of triple collisions, but only between consecutive particles, would suggest a phase transition
at about $0.245$, and conjecture that a critical probability for the discretised process exists and is slightly smaller than this value.

Despite these results, there have been no corresponding lower bounds, and in fact it was not previously known that stationary particles 
are almost surely annihilated for any nontrivial values of $p$. Proving that almost sure annihilation occurs for all
sufficiently small $p$ was therefore the central open question in ballistic annihilation \cite{BGJ18}. Our results not only
answer this question, but in fact give a lower bound which is closer to the conjectured critical probability than
the best known upper bounds.

\section{Results}
In order to analyse the survival of particles in ballistic annihilation, we shall instead consider the same process restricted 
to a finite or semi-infinite interval, with an absorbing barrier at each endpoint;
here we think of particles outside the interval being frozen. We make a slight distinction between open intervals, where
a particle starting at the endpoint is frozen, and any particle reaching the endpoint is absorbed rather than annihilated,
and closed intervals, where a particle starting at the endpoint is not frozen, and two particles may be annihilated at the endpoint.
Of course, this distinction does not matter when no particle starts at the endpoint; however, we shall frequently define the interval
in terms of the starting positions of the vertices.

For an interval $I$, we write $\{C\}_{I}$ for the event that condition $C$ is satisfied on the restricted process. The principle interval
we work with is $\future$, and so for conciseness we shall omit the subscript when referring to this interval.
When writing events, we use $\bullet$ for an arbitrary particle, $\go$, $\stay$ and $\come$ to indicate particles with a particular velocity, 
and $\to$ or $\from$ to show that a particle collides with another or is absorbed. We also write $\bullet_i$ for the $i$th particle in the right half-line 
(by starting position, from left to right), and $x_i$ for its position; in the discretised version we simply have $x_i=i$. We write $\prob{\cdot}$ for
probabilities in the original setting, and $\probz{\cdot}$ when dealing with the discretised setting. In the latter, we draw a distinction between single
and triple collisions, with \eg $\bullet_1\to\bullet$ indicating specifically a single collision, and being disjoint from $\bullet_1\triple$ 
(note that this differs from the way similar notation is used in \cite{BGJ18}).

In particular we consider the probability $q:=\prob{0\from\bullet}$, and the corresponding probability for the integer case 
$\hat{q}:=\probz{0\from\bullet}$. Note that 
\begin{equation}\theta(p)=(1-q)^2\,,\label{theta}\end{equation} 
and likewise
\begin{equation}\psi(p)=(1-\hat{q})^2\,.\label{psi}\end{equation} 
This is because with probability $q$ a particle from the right half-line would reach a stationary particle at $0$ if the left half-line were frozen, 
and independently with probability $q$ a particle from the left would reach $0$ if the right were frozen; if either of these events occurs then whichever 
particle started closest to $0$ will annihilate the original particle before particles from the other half line can prevent it. 
Thus proving (for a particular value of $p$) that almost surely every particle is eventually annihilated is equivalent to proving that $q=1$.

Consider the process on the right half-line $\future$, and reveal particles one by one from the left. Write $S_1$ for the event $\exists i: \onab{i}{\go[1]\to\stay[i]}$, 
that is, the event that the first particle is moving right, and, as we reveal further particles, we eventually find one which is stationary and which the first particle 
could hit, if all subsequent particles were frozen. Similarly, write $S_k$ for the event that, as we continue to reveal particles, we see at least $k$ such particles, that is
\[\exists i_1<\cdots<i_k: \bigcap\nolimits_{j=1}^k\onab{i_j}{\go[1]\to\stay[i_j]}\,.\]
Note that in order for $S_k$ to occur, $\bullet_{i_j}$ cannot be annihilated by $\bullet_1$ (in the overall process) for any $j<k$, 
and so is instead annihilated from the right. For each $k\geq 1$, let $s_k=\prob{S_k}$ and $\hat{s}_k=\probz{S_k}$.

A crucial ingredient in our proof will be the following result.
\begin{lemma}\label{flip}$\sum_{k\geq 1}s_k=pq$ and $\sum_{k\geq 1}\hat{s}_k=p\hat{q}$.\end{lemma}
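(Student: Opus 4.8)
The plan is to read each side as an expectation and to connect the two by reflecting every finite initial segment of the half-line. First I would observe that, writing $N$ for the number of indices $i$ with $\onab{i}{\go[1]\to\stay[i]}$, we have $S_k=\{N\geq k\}$, so $\sum_{k\geq1}s_k=\sum_{k\geq1}\prob{N\geq k}=\mathbb{E}[N]$, and the analogous identity holds for the $\hat{s}_k$. It therefore suffices to show $\mathbb{E}[N]=pq$. By linearity of expectation $\mathbb{E}[N]=\sum_{i\geq2}\prob{\onab{i}{\go[1]\to\stay[i]}}$, and the $i$th summand depends only on the velocities and the consecutive gaps of the first $i$ particles.

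The key step is a reflection. For fixed $i$ I would reflect the configuration of the first $i$ particles through the midpoint of $[x_1,x_i]$ and reverse every velocity; since the gaps are i.i.d.\ and the velocity law is symmetric, this map is measure-preserving, and it conjugates the annihilation dynamics. It sends the right-moving first particle at $x_1$ to a left-moving particle at $x_i$, the stationary target at $x_i$ to a stationary particle at $x_1$, and the requirement that $\bullet_2,\dots,\bullet_{i-1}$ annihilate among themselves so that $\go[1]$ reaches $\stay[i]$ to the mirror-image requirement. I thus expect $\prob{\onab{i}{\go[1]\to\stay[i]}}=\prob{\onab{i}{\stay[1]\from\come[i]}}$, the right-hand event being that $\bullet_1$ is stationary and $\come[i]$ reaches $x_1$ in the process restricted to $[x_1,x_i]$.

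Next I would discard the artificial barrier at $x_i$. Because $\come[i]$ moves left, no particle starting beyond $x_i$ can overtake it and no particle among $\bullet_2,\dots,\bullet_{i-1}$ can reach $x_i$ without first colliding with it; hence on the event in question neither that barrier nor the particles beyond $x_i$ plays any role, and the event coincides with $\{\stay[1]\}$ intersected with the event that, in the full half-line process, $\come[i]$ is the first particle to reach $x_1$. These events are disjoint in $i$, and their union over $i\geq2$ is exactly $\{\stay[1]\}\cap\{x_1\from\bullet\}$, so summing gives $\mathbb{E}[N]=\prob{\stay[1],\,x_1\from\bullet}$. Finally I would factorise: the event $\{x_1\from\bullet\}$ does not involve the velocity of $\bullet_1$, so it is independent of $\{\stay[1]\}$, and conditioning on $x_1$ and invoking translation invariance of the Poisson process identifies its probability with $q$, yielding $\mathbb{E}[N]=pq$. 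The discretised identity follows from the same argument with the reflection $j\mapsto i+1-j$ of $\mathbb{Z}$ and translation invariance of the integer model.

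The step I expect to require the most care is the reflection itself: verifying that reversing space and velocities genuinely conjugates the collision dynamics (including the triple collisions of the discrete model, which are themselves reflection-symmetric), and checking that the barrier at $x_i$ together with all particles beyond it can be deleted without changing the relevant event. Once that is pinned down, the passage from the finite-segment probabilities to the single absorbing event $\{x_1\from\bullet\}$, and hence to $pq$, should be routine.
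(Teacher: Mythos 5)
Your proposal is correct and follows essentially the same route as the paper: the paper's proof is exactly a measure-preserving reversal of the segment $[x_1,x_{i_k}]$ turning each occurrence of $\onab{i}{\go[1]\to\stay[i]}$ into $\onab{i}{\stay[1]\from\come[i]}$, followed by the disjoint decomposition of $\{\stay[1]\from\bullet\}$ over the annihilating index and the independence/translation-invariance factorisation into $pq$. Your repackaging via $\sum_k\prob{N\geq k}=\mathbb{E}[N]=\sum_i\prob{\onab{i}{\go[1]\to\stay[i]}}$ is only a bookkeeping variant of the paper's explicit bijection on pairs $(\omega,k)$, and all the delicate points you flag (reflection conjugating the dynamics, removability of the barrier at $x_i$ for a surviving left-mover) are handled the same way there.
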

\begin{proof}For a configuration $\omega\in S_k$, let $\rev(\omega,k)$ be the configuration obtained by reversing the interval $[x_1,x_{i_k}]$, that is, 
a particle at position $x$ in $\omega$ corresponds to a particle at position $x_{i_k}+x_1-x$ moving in the opposite direction in $\rev(\omega,k)$; note that
if $\omega$ is a valid configuration for the discretised model then so is $\rev(\omega,k)$.
In $\rev(\omega,k)$, the first particle is stationary, and the $i_k$th particle is moving to the left and will eventually collide with it. Thus 
$\rev(\omega,k)\in\{\stay[1]\from\bullet\}$.

Conversely, suppose $\omega^*\in\{\stay[1]\from\bullet\}$; clearly there is a unique $j>1$ for which 
$\omega^*\in\{\stay[1]\from\bullet_j\}$. Consequently, if $\omega^*=\rev(\omega,k)$ for some $\omega$ and $k$ then $\omega$ must be the 
configuration obtained from $\omega^*$ by reversing the interval $[x_1,x_j]$. Now $\omega\in\onab{j}{\go[1]\to\stay[j]}$, and hence there
is a unique $k$ such that $j$ is the $k$th value of $i$ for which $\omega\in\onab{i}{\go[1]\to\stay[i]}$. Thus for any 
$\omega^*\in\{\stay[1]\from\bullet\}$, there is a unique pair $(\omega,k)$ for which $\omega^*=\rev(\omega,k)$.

Since $\rev$ is a bijection between $\{(\omega,k):\omega\in S_k\}$ and $\{\stay[1]\from\bullet\}$, and is clearly measure-preserving, 
we have $\sum_{k\geq 1}s_k=\prob{\stay[1]\from\bullet}$ and $\sum_{k\geq 1}\hat{s}_k=\probz{\stay[1]\from\bullet}$. 
Note that the event $\{\stay[1]\from\bullet\}$ occurs if and only if
the events ${\stay[1]}$ and $\onfuture[x_1]{x_1\from\bullet}$ both occur, and these are independent 
(because they are defined on disjoint intervals).
Also, by translation invariance,
\[\probb{\onfuture[x_1]{x_1\from\bullet}}=\probb{\onfuture{0\from\bullet}}\,,\]
and the same is true for the discretised process, giving $\prob{\stay[1]\from\bullet}=pq$ and $\probz{\stay[1]\from\bullet}=p\hat{q}$, as required.
\end{proof}
Let $r$ be the probability, in the process restricted to $\future$, that the first particle is right-moving and is annihilated in a single collision 
with a stationary particle, and some particle reaches $0$, \ie
\[r:=\prob{(\go[1]\to\stay)\wedge(0\from\bullet)}\,,\]
and let $\hat{r}$ be the corresponding probability for the discretised process.
\begin{lemma}\label{conditioning}$q=\frac{1-p}{2}(1+q)+r(1-q)+pq^3$ and $\hat{q}=\frac{1-p}{2}(1+\hat{q})+\hat{r}(1-\hat{q})+p\hat{q}^3$.\end{lemma}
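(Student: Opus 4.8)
The plan is to condition on the type of the leftmost particle $\bullet_1$ on $\future$, exploiting the fact that, by translation invariance, the particles strictly to the right of $x_1$ form an independent copy of the half-line process. There are three cases. If $\bullet_1$ is left-moving (probability $\frac{1-p}{2}$) then, since nothing lies to its left, it reaches $0$ and $0\from\bullet$ occurs with certainty; this yields the term $\frac{1-p}{2}$. The remaining two cases must therefore produce $\frac{1-p}{2}q+r(1-q)$ (from $\go[1]$) and $pq^3$ (from $\stay[1]$), and it is the right-moving case that carries essentially all of the difficulty. The discrete model is handled identically, keeping single and triple collisions separate as in the statement.

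I would next dispose of the stationary case. If $\stay[1]$ occurs, a particle can only reach $0$ by first crossing $x_1$; the stationary particle there blocks the corridor until it is removed by the first particle reaching $x_1$ from the right, after which $(0,x_1)$ is empty and any further particle reaching $x_1$ travels on to $0$. Hence, writing $N$ for the number of particles absorbed at the barrier in the independent process on $\future[x_1]$, the event $0\from\bullet$ coincides with $\{N\ge 2\}$, so this case contributes $p\,\prob{N\ge 2}$. It then remains to show $\prob{N\ge 2}=q^3$ and $\probz{N\ge 2}=\hat q^3$; in fact the natural statement is that each further absorption costs a factor $q^2$, i.e.\ $\prob{N\ge k}=q^{2k-1}$, which I would establish by a renewal at the first absorption combined with a reversal of the interval up to it, in the spirit of \lem{flip}.

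The crux is the right-moving case, where I must evaluate $\prob{(\go[1])\wedge(0\from\bullet)}$ and match it with $\frac{1-p}{2}q+r(1-q)$. Here $\bullet_1$ sweeps to the right and is eventually consumed at some point $y$; once this happens the corridor to $0$ is cleared, and $0\from\bullet$ should occur precisely when the fresh process beyond $y$ sends a particle back to $y$. The single collision with a stationary particle together with $0\from\bullet$ is by definition exactly $r$, and the complementary behaviour—a collision $\go[1]\to\come$, or, in the discrete model, a triple collision $\go[1]\triple$—should supply the balance. The obstacle is that $\go[1]\to\stay$ is \emph{not} independent of the configuration to the right of $y$: whether $\bullet_1$ actually reaches a given stationary target depends on whether a left-moving particle annihilates that target from the right first, which is exactly the coupling of the two sides highlighted in the discussion of $S_k$. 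To untangle this I would use the reversal of \lem{flip}, which trades the condition ``the target survives until $\bullet_1$ arrives'' for an independent condition, and simultaneously accounts for the left-moving particles consumed en route, each of which removes a potential arrival at $0$; I expect this to convert the naive contribution $r$ into $r(1-q)$ and to produce the accompanying $\frac{1-p}{2}q$.

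Finally I would assemble the three contributions to obtain the stated identity for $q$, and run the identical argument in the discrete setting—now with the triple-collision events $\go[1]\triple$ tracked separately—to obtain the companion identity for $\hat q$. I expect the reversal bookkeeping in the right-moving case, keeping track of which targets are reached and which left-movers are consumed, to be the main technical burden; by contrast the left-moving and stationary cases, and the cube $\prob{N\ge 2}=q^3$, should be comparatively routine once the renewal/reversal framework of \lem{flip} is in place.
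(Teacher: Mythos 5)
Your overall strategy—condition on the velocity of $\bullet_1$ and invoke the reversal of \lem{flip} to untangle the dependence in the right-moving case—is the same as the paper's, but the accounting you assign to the cases is wrong, and the error is concentrated exactly where you declare the step ``comparatively routine.'' In the stationary case, your identification of $\{0\from\bullet\}\cap\{\stay[1]\}$ with $\{\stay[1]\}\cap\{N\geq 2\}$ is fine, but $\prob{N\geq 2}=q^2$, not $q^3$: the first absorption at $x_1$ costs a factor $q$, and given that the first absorbed particle is $\come[j]$, it has cleared everything in $(x_1,x_j)$, so a second absorption occurs if and only if the fresh process on $\future[x_j]$ sends a particle to $x_j$—one more factor of $q$, not two. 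In general $\prob{N\geq k}=q^k$, so your claimed $\prob{N\geq k}=q^{2k-1}$ fails already at $k=2$, and the stationary case contributes $pq^2$, not $pq^3$.

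Consequently your target for the right-moving case, $\frac{1-p}{2}q+r(1-q)$, is also wrong: it must be $\frac{1-p}{2}q+r(1-q)-pq^2(1-q)$, so that the surplus $pq^2-pq^2(1-q)=pq^3$ emerges only after the two cases are combined. The missing term $-pq^2(1-q)$ is precisely the point where \lem{flip} enters: one shows
\[\prob{(\go[1]\to\stay)\wedge(0\nfrom\bullet)}=\sum_{k\geq1}s_k\,(1-q)=pq(1-q)\,,\]
by decomposing according to the sequence of potential stationary targets $S_k$ and noting that the last target $\bullet_{i_k}$ being actually hit by $\go[1]$ is equivalent to the independent event $\onfuture[x_{i_k}]{x_{i_k}\nfrom\bullet}$; substituting this into the conditioning on how $\go[1]$ is annihilated produces the $-q\cdot pq(1-q)$ correction. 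Your sketch of the right-moving case correctly identifies the dependence problem but never pins down this decomposition, and since your stated target for that case is off by exactly the term the decomposition produces, executing your plan faithfully would leave the two sides failing to match. The fix is to prove the correct per-case contributions ($pq^2$ for $\stay[1]$, and the displayed identity for the right-moving case) rather than reverse-engineering them from the final formula.
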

\begin{proof}
We prove the latter statement; the only changes required to prove the former are the omission of terms involving a triple collision and the substitution of 
$x_1$ for $1$, etc., as appropriate.

Conditioning on the velocity of the first particle, we have
\begin{equation}\hat{q}=\frac{1-p}{2}\probz{0\from\bullet\mid\come[1]}
+p\probz{0\from\bullet\mid\stay[1]}
+\probz{(0\from\bullet)\wedge(\go[1])}\,.\label{cond1}\end{equation}
Clearly if the first particle moves left it will reach $0$. If the first particle is stationary, it is annihilated with probability $\hat{q}$, since this
equals $\probbz{\onfuture[1]{1\from\bullet}}$. Note, however, that this event occurs if and only if $\onabz[1]{j}{\stay[1]\from\come[j]}$ occurs 
for some $j$, since the progress of a left-moving particle cannot be affected by particles further to the right. 
Given that $\onabz{j}{\stay[1]\from\come[j]}$ occurs,
a particle reaches $0$ if and only if $\onfuture[j]{j\from\bullet}$ occurs, since the fact that $\bullet_j$ is left-moving and annihilates
$\bullet_1$ means that no particle from the right of $\bullet_j$ can encounter any particles after reaching $j$. Clearly $\onfuture[j]{j\from\bullet}$
is independent of $\onabz{j}{\stay[1]\from\come[j]}$ and has probability $\hat{q}$, so $\probz{0\from\bullet\mid\stay[1]}=\hat{q}^2$.

If the first particle moves right, it must eventually be annihilated (see \eg Lemma 3.3 of \cite{ST17}). Thus we have
\begin{equation}\frac{1-p}{2}=\probz{\go[1]\to\stay}+\probz{\go[1]\to\come}+\probz{\go[1]\triple}\,.\label{hits}\end{equation}
Conditioning on how the first particle is annihilated, we have
\begin{equation}
\begin{split}\probz{(0\from\bullet)\wedge(\go[1])}={}&\probz{(0\from\bullet)\wedge(\go[1]\to\stay)}
+\probz{0\from\bullet\mid\go[1]\to\come}\probz{\go[1]\to\come}\\
&+\probz{0\from\bullet\mid\go[1]\triple}\probz{\go[1]\triple}\,.\end{split}\label{cond}
\end{equation}
Now $\probz{0\from\bullet\mid\go[1]\to\come}=\probz{0\from\bullet\mid\go[1]\triple}=\hat{q}$, since, 
given that $\come[j]$ annihilates $\go[1]$, either in a single or triple collision,
$\{0\from\bullet\}$ if and only if $\onfuture[j]{j\from\bullet}$. Thus \eqref{cond} becomes
\begin{equation}
\probz{(0\from\bullet)\wedge(\go[1])}=\probz{(0\from\bullet)\wedge(\go[1]\to\stay)}
+\hat{q}\probz{\go[1]\to\come}+\hat{q}\probz{\go[1]\triple}\,,\label{q-if-right}
\end{equation}
and, combining \eqref{hits} and \eqref{q-if-right},
\begin{align}
\probz{(0\from\bullet)\wedge(\go[1])}&=\hat{r}+\hat{q}\bbr{\frac{1-p}{2}-\prob{\go[1]\to\stay}}\nonumber\\
&=\hat{r}+\hat{q}\bbr{\frac{1-p}{2}-\hat{r}-\prob{(\go[1]\to\stay)\wedge(0\nfrom\bullet)}}\,.\label{use-r}\end{align}
To complete the proof, note that $(\go[1]\to\stay)\wedge(0\nfrom\bullet)$ occurs if and only if for some $k$ there are $k$ stationary particles, 
$\stay[i_1],\ldots,\stay[i_k]$, such that $\onabz{i_j}{\go[1]\to\stay[i_j]}$ occur, $\onabz{i}{\go[1]\to\stay[i]}$ does not occur for any other
value of $i<i_k$, and furthermore $\onfuture[i_k]{i_k\nfrom\bullet}$ occurs; 
this last because any particle which reaches $i_k$ from the right will either annihilate $\stay[i_k]$ before $\go[1]$ does, create a
triple collision with $\stay[i_k]$ and $\go[1]$, or reach $0$. Thus
\begin{align}
\probz{(\go[1]\to\stay)\wedge(0\nfrom\bullet)}&=\sum_{k\geq1}\hat{s}_k\probbz{\onfuture[i_k]{i_k\nfrom\bullet}\mid S_k}\nonumber\\
&=\sum_{k\geq1}\hat{s}_k(1-\hat{q})\nonumber\\
&=p\hat{q}(1-\hat{q})\,,\label{exact}
\end{align}
using the fact that $S_k$ is determined on the disjoint interval $[0,i_k]$ and the result of \lem{flip}. 
Combining \eqref{exact}, \eqref{use-r} and \eqref{cond1} gives
\begin{align*}\hat{q}&=\frac{1-p}{2}+p\hat{q}^2+\hat{r}+\hat{q}\bbr{\frac{1-p}{2}-\hat{r}-p\hat{q}(1-\hat{q})}\\
&=\frac{1-p}{2}(1+\hat{q})+\hat{r}(1-\hat{q})+p\hat{q}^3\,.\qedhere\end{align*}
\end{proof}
\lem{conditioning} immediately provides a nontrivial lower bound on the threshold for ballistic annihilation; although this
bound applies to both processes we state it for the discretised process, since we shall subsequently obtain stronger bounds
for the original process.
\begin{theorem}\label{fifth}If $p\leq 0.2$ then $\psi(p)=0$, and if $p>0.2$ then
\[\psi(p)\leq\frac{4p+1-3\sqrt{2p-p^2}}{2p}\,.\]\end{theorem}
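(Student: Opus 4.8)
The plan is to read off a lower bound on $\hat{q}$ from \lem{conditioning} and then convert it via \eqref{psi}. Since $\psi(p)=(1-\hat{q})^2$ and $\hat{q}$ is a probability, so that $0\le\hat{q}\le1$, an upper bound on $\psi$ is exactly a lower bound on $\hat{q}$. In the identity $\hat{q}=\frac{1-p}{2}(1+\hat{q})+\hat{r}(1-\hat{q})+p\hat{q}^3$, the one term I cannot control cheaply is $\hat{r}(1-\hat{q})$; but it is nonnegative, because $\hat{r}$ is a probability and $1-\hat{q}\ge0$. Discarding it yields $\hat{q}\ge\frac{1-p}{2}(1+\hat{q})+p\hat{q}^3$, and everything after that is algebra.

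First I would rearrange this as $p\hat{q}^3-\frac{1+p}{2}\hat{q}+\frac{1-p}{2}\le0$. The left-hand side, as a cubic in $\hat{q}$, vanishes at $\hat{q}=1$, so I would factor out $(1-\hat{q})$ to obtain $(1-\hat{q})\bigl(p\hat{q}^2+p\hat{q}-\frac{1-p}{2}\bigr)\ge0$. Since $\hat{q}\le1$ the first factor is nonnegative, so the sign of the product is governed by the quadratic $g(x)=px^2+px-\frac{1-p}{2}$, which is negative at $0$ and has a unique nonnegative root $x_+=\frac{\sqrt{2p-p^2}-p}{2p}$.

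The case analysis then turns on whether $x_+<1$. A direct check gives $x_+\ge1$ precisely when $p\le0.2$. For such $p$, assuming $\hat{q}<1$ would force $g(\hat{q})\ge0$ and hence $\hat{q}\ge x_+\ge1$, a contradiction; thus $\hat{q}=1$ and $\psi(p)=0$. For $p>0.2$ we have $x_+<1$, and in every case (whether $\hat{q}=1$ or $\hat{q}<1$) the inequality forces $\hat{q}\ge x_+$; hence $1-\hat{q}\le1-x_+=\frac{3p-\sqrt{2p-p^2}}{2p}\ge0$, and squaring gives $\psi(p)=(1-\hat{q})^2\le(1-x_+)^2$. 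The last step is the routine computation $(1-x_+)^2=\frac{4p+1-3\sqrt{2p-p^2}}{2p}$, which is the claimed bound.

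There is no serious obstacle once \lem{conditioning} is granted: the argument is essentially bookkeeping. The point needing care is the direction of the estimate—because $\psi$ decreases in $\hat{q}$, I must discard $\hat{r}$ rather than bound it from above—and the mildly surprising feature is that the crude estimate $\hat{r}\ge0$ already yields a bound simplifying exactly to the stated (squared) expression, with the threshold $p=0.2$ emerging naturally as the value at which the forced lower bound on $\hat{q}$ first reaches $1$. The identical computation, omitting the triple-collision terms, applies verbatim to $q$, $r$ and $\theta$, so the same bound holds for the original process, as indicated in the statement.
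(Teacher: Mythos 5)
Your proposal is correct and follows essentially the same route as the paper: discard the nonnegative term $\hat{r}(1-\hat{q})$ in \lem{conditioning}, factor the resulting cubic as $(\hat{q}-1)\bigl(p\hat{q}^2+p\hat{q}-\tfrac{1-p}{2}\bigr)\le0$, and conclude that $\hat{q}$ is at least the smaller positive root, with the threshold $p=0.2$ arising exactly where that root equals $1$. The only cosmetic difference is that the paper invokes continuity and the signs $f(0)>0$, $f(\hat{q})\le0$ where you argue directly from the factorisation; the algebra and conclusions are identical.
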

\begin{proof}By \lem{conditioning}, $\hat{q}\geq\frac{1-p}{2}(1+\hat{q})+p\hat{q}^3$. 
Consider the function $f(x)=\frac{1-p}{2}(1+x)+px^3-x$, and note that
\[f(x)=(x-1)\bbr{px^2+px-\frac{1-p}{2}}\,.\]
Since $f(x)$ is continuous, $f(0)>0$ and $f(\hat{q})\leq 0$, $\hat{q}$ must be at least the smaller
positive root of $f(x)=0$, \ie
\[\hat{q}\geq\min\bbr{\frac{-p+\sqrt{2p-p^2}}{2p},1}\,.\] 
For $p\leq 1/5$ the smaller root is $1$, and so $\hat{q}=1$ and $\psi(p)=0$. For $p>1/5$ the other root is smaller,
and \eqref{psi} applied with this bound on $\hat{q}$ gives the required bound on $\psi(p)$.
\end{proof}
In order to improve the result of \thm{fifth} for the original process, we need a nontrivial lower bound on $r$.
\begin{lemma}\label{rbounds}$r\geq(1-p)pq/4$.\end{lemma}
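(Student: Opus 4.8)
The plan is to bound $r$ from below by the probability of one explicit favourable configuration, built from the first two particles and the independent process to their right, with the factor $\tfrac14$ coming from a fair race.

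Write $x_1<x_2$ for the positions of the first two particles and $\tau=x_2-x_1$ for the first gap, and call the restriction of the process to $\future[x_2]$ (with an absorbing barrier at $x_2$) the right environment. Let $\sigma$ be the first time a particle of the right environment reaches $x_2$, so that $\{\sigma<\infty\}=\onfuture[x_2]{x_2\from\bullet}$ has probability $q$ by translation invariance. I would restrict to the event $E$ on which $\bullet_1$ is right-moving, $\bullet_2$ is stationary, and $\tau<\sigma<\infty$. On $E$ nothing crosses $x_2$ before time $\tau$, so $\bullet_1$ meets the still-present $\stay[2]$ and is annihilated by it in a single collision, giving $\go[1]\to\stay$; thereafter $(0,x_2]$ is empty, so the particle that reaches $x_2$ at time $\sigma>\tau$ continues unobstructed to the barrier, giving $0\from\bullet$. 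Hence $E\subseteq(\go[1]\to\stay)\wedge(0\from\bullet)$ and $r\geq\prob{E}$. As the velocities of $\bullet_1$ and $\bullet_2$ are independent of one another, of all positions, and of the right environment, this gives $\prob{E}=\tfrac{1-p}2\,p\,\prob{\tau<\sigma<\infty}$, and it remains to prove $\prob{\tau<\sigma\mid\sigma<\infty}\geq\tfrac12$.

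The race is handled by two observations. First, a left-moving particle surviving to $x_2$ arrives there at a time equal to its initial distance from $x_2$; hence $\sigma$ is the least initial distance among the particles that reach $x_2$, and in particular $\sigma\geq g_1$, where $g_1=x_3-x_2$ is the first gap of the right environment. Second, $g_1$ is independent of $\{\sigma<\infty\}$: conditioning on $g_1=a$, the interval $(x_2,x_2+a)$ is empty, so whether a particle reaches $x_2$ depends only on the configuration on $[x_2+a,\infty)$, whose law relative to $x_2+a$ does not depend on $a$. Since $\tau$ is independent of the right environment and $\tau,g_1$ are i.i.d., these give
\[\prob{\tau<\sigma\mid\sigma<\infty}\geq\prob{\tau<g_1\mid\sigma<\infty}=\prob{\tau<g_1}=\tfrac12\,,\]
whence $r\geq\tfrac{1-p}2\,p\,\tfrac q2=(1-p)pq/4$.

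The main obstacle, I expect, is the independence claim in the second observation: it is exactly what upgrades the race from ``at least bounded below'' to ``exactly fair'', and so is responsible for the clean constant $\tfrac14$. Establishing it requires checking that the survival event for the right environment is insensitive to the length of its first gap, which in turn rests on translation invariance and the emptiness of $(x_2,x_2+a)$. The remaining care is in verifying that on $E$ the collision of $\bullet_1$ is genuinely a single collision with a stationary particle while a later particle still passes through to $0$; this needs the observation that, until time $\sigma$, the right environment evolves in the full process exactly as in isolation, since nothing crosses $x_2$ in either direction before then.
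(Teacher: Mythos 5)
Your argument is correct and is essentially the paper's proof in different clothing: both lower-bound $r$ by the probability that $\bullet_1$ is right-moving, $\bullet_2$ is stationary, some particle arrives at $x_2$ from the right, and $\bullet_1$ wins the race against that arrival, yielding $\frac{1-p}{2}\cdot p\cdot\frac12\cdot q$ via the same independence and translation-invariance facts. The paper encodes the race directly as $x_3>2x_2-x_1$ together with $\onfuture[2x_2-x_1]{(2x_2-x_1)\from\bullet}$, which is precisely the sub-event you pass to when you replace $\sigma$ by the first gap $g_1=x_3-x_2$ and use that the two gaps are i.i.d.\ exponentials.
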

\begin{proof}
We first claim that  
\begin{equation}(\go[1])\wedge(\stay[2])\wedge(x_3>2x_2-x_1)\wedge(\onfuture[2x_2-x_1]{(2x_2-x_1)\from\bullet})
\subset(\go[1]\to\stay)\wedge(0\from\bullet)\,.\label{quick}\end{equation}
To see this, note that if $(\go[1])\wedge(\stay[2])\wedge(x_3>2x_2-x_1)$ occurs then no other particle can reach $(0,x_2]$ before $\bullet_1$ 
annihilates $\bullet_2$, and if additionally $(\onfuture[2x_2-x_1]{(2x_2-x_1)\from\bullet})$ occurs then the particle which reaches $2x_2-x_1$
from the right cannot encounter any other particle before reaching $0$.

The velocities of particles are independent of each other and of their positions, and, for any given values of $x_1$ and $x_2$,
the event $(x_3>2x_2-x_1)$ depends only on the interval $(x_2,2x_2-x_1]$, disjoint from $\future[2x_2-x_1]$. Thus the four events 
on the left-hand side of \eqref{quick} are independent. Since they have probabilities $\frac{1-p}{2}$, $p$, $\frac12$ and $q$ 
respectively (the third because $x_2-x_1$ and $x_3-x_2$ are independent identically-distributed exponential random variables), the result follows.
\end{proof}
\begin{figure}
\centering
\includegraphics{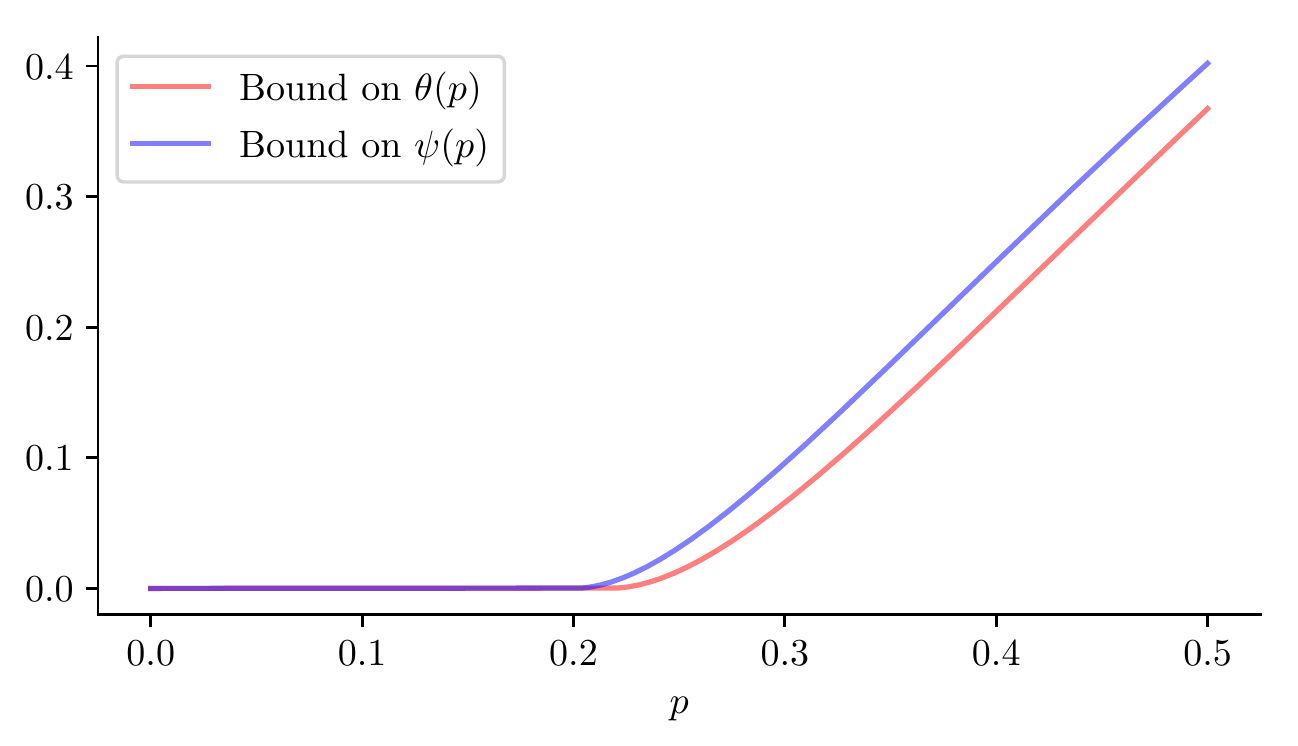}
\caption{Upper bounds on $\theta(p)$ and $\psi(p)$.}\label{thefigure}
\end{figure}
\begin{theorem}\label{better}If $p\leq\frac{\sqrt{89}-9}{2}$ then $\theta(p)=0$, and if $p>\frac{\sqrt{89}-9}{2}$ then
\[\theta(p)\leq\frac{16 + 49p + 14p^2 + p^3 - (11 + p)\sqrt{32p - 23p^2 + 6p^3 + p^4}}{32p}\,.\]
\end{theorem}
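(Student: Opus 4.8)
The plan is to follow the template of \thm{fifth}, but to incorporate the lower bound on $r$ from \lem{rbounds} so as to strengthen the polynomial inequality satisfied by $q$. The exact identity $q=\frac{1-p}{2}(1+q)+r(1-q)+pq^3$ of \lem{conditioning} is the starting point. Since $q$ is a probability, $1-q\geq0$, and so I may substitute the bound $r\geq(1-p)pq/4$ directly into the term $r(1-q)$ without reversing the inequality, obtaining
\[q\geq\frac{1-p}{2}(1+q)+\frac{(1-p)pq}{4}(1-q)+pq^3\,.\]

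Then I would introduce $g(x):=\frac{1-p}{2}(1+x)+\frac{(1-p)px}{4}(1-x)+px^3-x$, so that the displayed inequality reads $g(q)\leq0$, while $g(0)=\frac{1-p}{2}>0$. Exactly as in \thm{fifth}, the useful structural fact is that $x=1$ is always a root, so $g$ factors as $g(x)=(x-1)h(x)$ with $h(x)=px^2+\frac{p(3+p)}{4}x-\frac{1-p}{2}$. Because $h(0)=-\frac{1-p}{2}<0$ and $h$ has positive leading coefficient, $h$ has a single positive root $x^\ast$, which the quadratic formula gives explicitly; the quantity appearing under the square root simplifies to $p(p^3+6p^2-23p+32)=32p-23p^2+6p^3+p^4$, matching the radicand appearing in the statement. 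Since the positive roots of $g$ are precisely $1$ and $x^\ast$, continuity of $g$ with $g(0)>0$ and $g(q)\leq0$ forces $q$ to be at least the smaller of these two roots, just as in \thm{fifth}.

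Finally I would identify which root is smaller and pass to $\theta(p)=(1-q)^2$. The comparison of $x^\ast$ with $1$ is controlled by the sign of $h(1)=\frac{p^2+9p-2}{4}$, which vanishes exactly at $p=\frac{\sqrt{89}-9}{2}$. For $p$ below this value $h(1)<0$, so $1<x^\ast$ and the smaller root is $1$, giving $q=1$ and hence $\theta(p)=0$; for $p$ above it $x^\ast<1$, so $q\geq x^\ast$ and hence $\theta(p)\leq(1-x^\ast)^2$. I expect the only real work to be computational: confirming the factorisation $g(x)=(x-1)h(x)$ and then squaring $1-x^\ast$ and clearing the radical to recover the stated numerator $16+49p+14p^2+p^3-(11+p)\sqrt{32p-23p^2+6p^3+p^4}$, an expression that conceals a fair amount of cancellation. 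Conceptually the argument needs nothing beyond \lem{conditioning}, \lem{rbounds}, and the continuity and root-location reasoning already used for \thm{fifth}.
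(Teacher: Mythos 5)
Your proposal is correct and follows essentially the same route as the paper: substitute the bound of \lem{rbounds} into the identity of \lem{conditioning}, factor the resulting cubic as $(x-1)$ times a quadratic, and locate $q$ above the smaller positive root before applying \eqref{theta}. Your explicit remarks that $1-q\geq 0$ justifies the substitution and that the sign of $h(1)$ determines the threshold $\frac{\sqrt{89}-9}{2}$ are small elaborations the paper leaves implicit, not a different argument.
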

\begin{remark}$\frac{\sqrt{89}-9}{2}\approx 0.21699$.\end{remark}
\begin{proof}
\lem{conditioning} and \lem{rbounds} give $q\geq\frac{1-p}{2}(1+q)+(1-p)pq(1-q)/4+pq^3$. Writing
$g(x)=\frac{1-p}{2}(1+x)+(1-p)px(1-x)/4+px^3-x$, we have
\[g(x)=(x-1)\bbr{px^2+\frac{3p+p^2}{4}x-\frac{1-p}{2}}\,,\]
and $g(0)>0$ but $g(q)\leq 0$. Thus $q$ is at least the smallest positive root of $g(x)=0$, \ie
\[q\geq\min\bbr{\frac{-3p-p^2+\sqrt{32p - 23p^2 + 6p^3 + p^4}}{8p},1}\,.\] 
For $p\leq\frac{\sqrt{89}-9}{2}$ the smaller root is $1$, and so $q=1$ and $\theta(p)=0$. For $p>\frac{\sqrt{89}-9}{2}$ the other root is smaller,
and applying \eqref{theta} gives the required bound on $\theta(p)$.\end{proof}
Figure~\ref{thefigure} shows the bounds on $\theta(p)$ and $\psi(p)$ given by Theorems \ref{better} and \ref{fifth} respectively. Combining these with 
the results of \cite{ST17} and \cite{BGJ18} establishes that
\[0.21699\leq\pc-\leq\pc+\leq0.32803\]
and
\[0.2\leq\pc[\hat]-\leq\pc[\hat]+\leq0.287\,;\]
recall that it is conjectured that $\pc-=\pc+=0.25$ and $\pc[\hat]-=\pc[\hat]+<0.245$. Small improvements to these lower bounds could be made by more complicated
bounds on $r$ and $\hat{r}$ in the spirit of \lem{rbounds}. However, in order to obtain tight lower bounds it would be necessary to find exact expressions
for $r$ and $\hat{r}$ in terms of the other probabilities; in the discretised case any attempt to obtain a tight lower bound would be further 
hampered by the fact that there is as yet not even an exact conjecture for the best possible value.

\section*{Acknowledgements}
The author was supported by the European Research Council (ERC) under
the European Union's Horizon 2020 research and innovation programme (grant 
agreement no.\ 639046), and is grateful to Agelos Georgakopoulos for several
helpful discussions.


\begin{thebibliography}{99}

\bibitem{BRL93} E. Ben-Naim, S. Redner and F. Leyvraz, 
Decay kinetics of ballistic annihilation, 
\textit{Phys.\ Rev.\ Lett.}\ \textbf{70} (1993), no.\ 12, 1890--1893.

\bibitem{BM17} N. Broutin and J.-F. Marckert, 
The combinatorics of the colliding bullets problem,
arXiv:1709.00789, 2017.

\bibitem{BGJ18} D. Burdinshi, S. Gupta and M. Junge, 
The upper threshold in ballistic annihilation, 
arXiv:1805.10969, 2018.

\bibitem{DRFP} M. Droz, P.-A. Rey, L. Frachebourg and J. Piasecki, 
Ballistic-annihilation kinetics for a multivelocity one-dimensional ideal gas, 
\textit{Phys.\ Rev.\ E} \textbf{51} (1995), no.\ 6, 5541--5548.

\bibitem{DKZJRS} B. Dygert, C. Kinzel, J. Zhu, M. Junge, A. Raymond and E. Slivken,
The bullet problem with discrete speeds, 
arXiv:1610.00282, 2016.

\bibitem{EF85} Y. Elskens and H. L. Frisch, 
Annihilation kinetics in the one-dimensional ideal gas,
\textit{Phys.\ Rev.\ A} \textbf{31} (1985), no.\ 6, 3812--3816.

\bibitem{KW14} M. Kleber and D. Wilson, 
``Ponder This'' IBM research challenge, 
\url{https://www.research.ibm.com/haifa/ponderthis/challenges/May2014.html}, 2014.

\bibitem{KS88} J. Krug and H. Spohn, 
Universality classes for deterministic surface growth,
\textit{Phys.\ Rev.\ A} \textbf{38} (1988), no.\ 8, 4271--4283.

\bibitem{KRL95} P. L. Krapivsky, S. Redner and F. Leyvraz, 
Ballistic annihilation kinetics: The case of discrete velocity distributions, 
\textit{Phys.\ Rev.\ E} \textbf{51} (1995), no.\ 5, 3977--3987.

\bibitem{Red97} S. Redner, 
Scaling theories of diffusion-controlled and ballistically controlled 
bimolecular reactions, in 
\textit{Nonequilibrium statistical mechanics in one dimension} (ed.\ V. Plivman), 
Cambridge University Press, 1997.

\bibitem{ST17} V. Sidoravicius and L. Tournier, 
Note on a one-dimensional system of annihilating particles,
\textit{Electron.\ Commun.\ Probab.}\ \textbf{22}, (2017), \#59, 9pp.

\end{thebibliography}
\end{document}